\documentclass[
]{amsart}
\usepackage{algorithm, algorithmic}
\usepackage{amsmath}\usepackage{amsthm}\usepackage{amssymb}\usepackage{mathrsfs}\usepackage{amscd}\usepackage{graphicx}\usepackage{subfigure}\usepackage{amsfonts}\usepackage{amsxtra}\usepackage{color}
\usepackage{amscd}
\usepackage{hyperref}





\DeclareMathOperator{\wce}{wce}

\DeclareMathOperator{\vol}{vol}
\DeclareMathOperator{\G}{\mathcal{G}}

\DeclareMathOperator{\trace}{trace}

\DeclareMathOperator*{\spann}{span}\DeclareMathOperator{\dist}{dist}\DeclareMathOperator{\supp}{supp}\DeclareMathOperator{\tr}{\trace}

\theoremstyle{definition}
\newtheorem{definition}{Definition}[section]
\newtheorem{remark}[definition]{Remark}
\theoremstyle{plain}\newtheorem{theorem}[definition]{Theorem}\newtheorem{lemma}[definition]{Lemma}\newtheorem{proposition}[definition]{Proposition}

\newcommand{\R}{\mathbb{R}}\newcommand{\C}{\mathbb{C}}





\begin{document}

\title[asymptotically optimal covering radius]{Points on manifolds with asymptotically optimal covering radius
}

\author[A.~Breger]{Anna Breger}
\address[A.~Breger]{University of Vienna,
Department of Mathematics,
Oskar-Morgenstern-Platz 1
A-1090 Vienna
}
\email{anna.breger@univie.ac.at}

\author[M.~Ehler]{Martin Ehler}
\address[M.~Ehler]{University of Vienna,
Department of Mathematics,
Oskar-Morgenstern-Platz 1
A-1090 Vienna
}
\email[Corresponding author]{martin.ehler@univie.ac.at}

\author[M.~Gr\"af]{Manuel Gr\"af}
\address[M.~Gr\"af]{University of Vienna,
Department of Mathematics,
Oskar-Morgenstern-Platz 1
A-1090 Vienna
}
\email{manuel.graef@univie.ac.at}

\begin{abstract}
Given a finite set of points on the Euclidean sphere, the worst case quadrature error in Sobolev spaces has recently been shown to provide upper bounds on the covering radius of the point set. Moreover, quasi-Monte Carlo integration points on the sphere achieve the asymptotically optimal covering radius. Here, we extend these results to points on compact smooth Riemannian manifolds and provide numerical experiments illustrating our findings for the Grassmannian manifold. 
\end{abstract}

\keywords{covering radius, worst case integration error, Riemannian manifold, cubature points, quasi-Monte Carlo integration}

\maketitle


\section{Introduction}
\label{sec:1}
Many discretization schemes in numerical analysis are based on finite samples
that cover the underlying space sufficiently well, i.e., the sampling points
have small covering radius. One way of measuring the covering's efficiency is by
its cardinality in comparison to its covering radius.

Quasi-Monte Carlo integration points have been investigated in
\cite{Brandolini:2014oz} for compact smooth Riemannian manifolds. In the special
case of the Euclidean sphere, it is shown recently in \cite{Brauchart:2015ec}
that the worst case error of integration bounds the covering radius and that
thereby quasi-Monte Carlo integration points provide asymptotically optimal
covering radii.

Here, we extend these results from the sphere to compact smooth Riemannian manifolds. 
These theoretical results are derived by combining the ideas in \cite{Brandolini:2014oz} with those in \cite{Brauchart:2015ec}. 

In the second part of the present note, we numerically construct a sequence of quasi-Monte Carlo integration points for the Grassmannian manifold and illustrate numerically that their covering radii indeed behave in accordance to the theoretical findings, hence, asymptotically optimal. 

Our quasi-Monte Carlo integration points are cubatures (in fact designs) in Grassmannians that have been studied in \cite{Bachoc:2005aa,Bachoc:2004fk,Bachoc:2002aa,Bachoc:2010aa} from a theoretical point of view, see \cite{Ehler:2014zl} for the construction through numerical minimization. For related results on cubatures in more classical settings, see \cite{Dung:2015fe,Hinrichs:2015bf,Hinrichs:2014oj,Krieg:2016sf,Nowak:2010rr,Sloan:2004qd,Ullrich:2014oj} and, for further related results, we refer to \cite{Brauchart:2015qp,Choirat:2013oq,Damelin:2003lh,Filbir:2010aa,Filbir:2011fk,Hellekalek:2016ax,Niederreiter:2003hb} and references therein.

The outline is as follows. In Section \ref{sec:2}, we briefly discuss the concept of asymptotically optimal covering radii. Low-cardinality cubature points are introduced in Section \ref{sec:3}, where we also state our main theoretical result. In Section \ref{sec:wce} we state the bound on the covering radius by the worst case error of integration. Section \ref{sec:4} is dedicated to the proof of this bound. In Section \ref{sec:nums}, we illustrate our theoretical findings by numerical experiments for the Grassmannian manifold $\mathcal{G}_{2,4}$.

\section{Optimal asymptotics of the covering radius}\label{sec:2}
Let $\mathcal{M}$ be a compact smooth $d$-dimensional Riemannian manifold. We
denote its normalized Riemannian measure by $\mu$ and its Riemannian distance by
$\dist$. Given any finite collection of points
$\{x_j\}_{j=1}^n\subset \mathcal{M}$, the \emph{covering radius} $\rho$ is
\begin{equation*}
\rho:=\rho(\{x_j\}_{j=1}^n):=\sup_{x\in \mathcal{M}} \min_{1\leq j\leq n} \dist(x,x_j).
\end{equation*}
For $
B_r(x):=\{y\in\mathcal{M}:\dist(x,y)\leq r\} $ 
denoting the ball of radius $r$ centered at $x\in\mathcal{M}$, the union
$\bigcup_{j=1}^n B_\rho(x_j)$ covers $\mathcal{M}$
completely.\renewcommand*{\thefootnote}{\fnsymbol{footnote}}\setcounter{footnote}{1}
By compactness of $\mathcal M$ we have \footnote{ We use the notation
  $\lesssim$, meaning the left-hand side is less or equal to the right-hand side
  up to a positive constant factor. The symbol $\gtrsim$ is used analogously,
  and $\asymp$ means both hold, $\lesssim$ and $\gtrsim$. If not explicitly
  stated, the dependence or independence of the constants shall be clear from
  the context.}
\begin{equation}\label{eq:inequality balls}
  \mu(B_r(x)) \asymp r^d ,\quad x \in \mathcal M,\quad  0<r\leq 1,
\end{equation}
where the constants do not depend on $x$ or $r$. Hence, the line of inequalities
$\mu(\mathcal{M})\leq \sum_{j=1}^n \mu(B_\rho(x_j))\lesssim n\rho^d$ leads to
the lower bound
\begin{equation}\label{eq:low bound rho ho}
n^{-\frac{1}{d}}\lesssim \rho.
\end{equation} 

\begin{definition}
Given a sequence of $n_i$ points $\{x^i_j\}_{j=1}^{n_i}\subset\mathcal{M}$, $i=1,2,\ldots$, with $n_i\rightarrow\infty$, we say that the corresponding sequence of covering radii $\rho_i$ is \emph{asymptotically optimal} if the lower bound \eqref{eq:low bound rho ho} is matched, i.e., if
$
\rho_{i} \asymp n_i^{-\frac{1}{d}}.
$
\end{definition}

According to \cite{Reznikov:2015zr}, the expectation of the covering radius $\rho$ of $n$ random points $\{x_j\}_{j=1}^n$, independently identically distributed according to $\mu$, satisfies
\begin{equation}\label{eq:random asympt}
\mathbb{E}\rho \asymp n^{-\frac{1}{d}} \log(n)^{\frac{1}{d}}.
\end{equation}
Hence, there is an additional logarithmic factor, so that random points do not provide optimal covering radii. 

In this brief note, we shall verify that the recently introduced concept of quasi-Monte Carlo systems, cf.~\cite{Brandolini:2014oz,Brauchart:fk}, lead to point sets with asymptotically optimal covering radii. This generalizes results for the sphere in \cite{Brauchart:2015ec} to compact smooth Riemannian manifolds. 

\section{Optimal coverings from low-cardinality cubatures}\label{sec:3}

Let $\{\varphi_\ell\}_{\ell=0}^\infty$ be the collection of orthonormal
eigenfunctions of the Laplace-Beltrami operator $\Delta$ on $\mathcal{M}$ with
eigenvalues $\{-\lambda_\ell\}_{\ell=0}^\infty$ arranged by
$0=\lambda_0 \leq \lambda_1\leq \ldots$. We denote by $L_p(\mathcal{M})$,
$1\leq p<\infty$, the Banach space of complex-valued $\mu$-measurable functions
on $\mathcal{M}$, whose $p$-th power of the absolute value is integrable (with
the standard modifications when $p=\infty$).

The space of \emph{diffusion polynomials} of bandwidth $t \ge 0$ is
\begin{equation}\label{eq:poly diff}
\Pi_t:=\spann\{\varphi_\ell:\lambda_\ell\leq  t^2 \}.
\end{equation}
For $\{x_j\}_{j=1}^n\subset\mathcal{M}$ and weights
$\{\omega_j\}_{j=1}^n\subset\R$, we say that $\{(x_j,\omega_j)\}_{j=1}^n$ is a
\emph{cubature for $\Pi_t$} if 
  \begin{equation}\label{eq:def cub 00}
    \int_{\mathcal{M}} f(x)d\mu(x) = \sum_{j=1}^n \omega_j f(x_j),\quad \text{for all } f\in  \Pi_t.
  \end{equation}
The number $t$ refers to the \emph{strength} of the cubature.

Weyl's estimates on the spectrum of an elliptic operator yield 
\begin{equation*}
\dim(\Pi_t)\asymp t^d,
\end{equation*}
cf.~\cite[Theorem 17.5.3]{Hormander:1983gf}. Therefore, any sequence of
cubatures $\{(x^i_j,\omega^i_j)\}_{j=1}^{n_i}$ of strength $t_i$ must obey
$ n_i\gtrsim t_i^d $. 
\begin{definition}
We call a sequence of cubatures $\{(x^i_j,\omega^i_j)\}_{j=1}^{n_i}$ for $\Pi_{t_i}$ satisfying 
 \begin{equation}\label{eq:few cubs2}
n_{i}\asymp t_i^d
\end{equation}
with $n_i\rightarrow\infty$ a \emph{low-cardinality cubature sequence}. 
\end{definition}
The above definition makes sense since there do exist sequences of cubatures $\{(x^i_j,\omega^i_j)\}_{j=1}^{n_i}$ of strength $t_i$ with positive weights and $n_i\rightarrow \infty$ satisfying \eqref{eq:few cubs2}, cf.~\cite{Harpe:2005fk}. We now state our main theoretical result.
\begin{theorem}\label{th:basis}
  The covering radius of any low-cardinality cubature sequence with positive weights is asymptotically
  optimal.
\end{theorem}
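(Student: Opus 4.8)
The plan is to keep the easy half, namely the lower bound \eqref{eq:low bound rho ho}, and to produce a matching upper estimate $\rho_i\lesssim n_i^{-1/d}$; since a low-cardinality cubature sequence satisfies $n_i\asymp t_i^d$, this is the same as $\rho_i\lesssim t_i^{-1}$. I would obtain it from two ingredients: (a) the bound of the covering radius by the worst case integration error announced in Section~\ref{sec:wce}, and (b) an estimate of that worst case error that exploits the positivity of the weights and exactness on $\Pi_{t_i}$. Throughout, since $\mathcal M$ is compact, for $\rho_i$ small the ball $B_{\rho_i}(x)$ is a mildly distorted geodesic ball, so the usual Euclidean rescaling estimates are available with uniform constants.

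For (a) I would use a localized test function. Fix $i$ and pick $x^\ast\in\mathcal M$ realizing $\rho_i$, so that the open ball $B_{\rho_i}(x^\ast)$ contains none of the points $x^i_j$. For a smoothness parameter $s>d/2$ choose $\psi$ with $0\le\psi\le1$, $\psi\equiv1$ on $B_{\rho_i/2}(x^\ast)$ and $\supp\psi\subset B_{\rho_i}(x^\ast)$. Then $\psi(x^i_j)=0$ for every $j$, so by the cubature property the integration error equals $\int_{\mathcal M}\psi\,d\mu$, which is $\gtrsim\rho_i^d$ by \eqref{eq:inequality balls}, while scaling gives $\|\psi\|_{H^s(\mathcal M)}\lesssim\rho_i^{d/2-s}$. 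Dividing, the worst case error $\wce(\{(x^i_j,\omega^i_j)\}_{j};H^s)$ is bounded below by the ratio, i.e.
\begin{equation*}
\rho_i^{\,s+d/2}\;\lesssim\;\wce\bigl(\{(x^i_j,\omega^i_j)\}_{j};H^s\bigr),
\end{equation*}
with an implied constant depending on $s$ and $\mathcal M$ but not on $i$.

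For (b) I would expand the reproducing kernel of $H^s$ in the eigenbasis $\{\varphi_\ell\}$. Exactness on $\Pi_{t_i}$ forces every coefficient $\sum_j\omega^i_j\varphi_\ell(x^i_j)$ with $\lambda_\ell\le t_i^2$ to vanish except for $\ell=0$, so only the high-frequency tail contributes to $\wce^2$; using $\omega^i_j>0$ and $\sum_j\omega^i_j=1$ this tail is dominated by $\sum_j\omega^i_j\sum_{\lambda_\ell> t_i^2}(1+\lambda_\ell)^{-s}|\varphi_\ell(x^i_j)|^2$, and a dyadic decomposition together with the pointwise Weyl law $\sum_{\lambda_\ell\le T^2}|\varphi_\ell(x)|^2\asymp T^d$ yields
\begin{equation*}
\wce\bigl(\{(x^i_j,\omega^i_j)\}_{j};H^s\bigr)\;\lesssim\;t_i^{-(s-d/2)},\qquad s \text{ bounded away from } d/2,
\end{equation*}
with a constant independent of $i$. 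Combining with (a) gives $\rho_i\lesssim t_i^{-(s-d/2)/(s+d/2)}$, and letting $s=s_i\to\infty$ pushes the exponent to $1$, hence $\rho_i\lesssim t_i^{-1}\asymp n_i^{-1/d}$, as claimed.

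The step I expect to be the main obstacle is the last one: one must track how the constants — above all the $H^s$-norm of the bump $\psi$ in (a) — grow with $s$, and tune the growth of $s_i$ against that of $n_i$, so that after the optimization one genuinely recovers the order $n_i^{-1/d}$ and not merely $n_i^{-1/d}$ up to a lower order factor. Everything else (the scaling estimates for $\psi$ and the Weyl bookkeeping) is routine.
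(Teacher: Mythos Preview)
Your overall architecture---bound $\rho_i$ from above by a worst case integration error, then bound that error using exactness on $\Pi_{t_i}$ and positivity of the weights---is exactly the paper's strategy. The difficulty you flag at the end is, however, not a technicality to be managed but a genuine obstruction, and the paper removes it by a different choice of function space rather than by optimizing in $s$.

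Concretely: working in $H^s=W^s_2$ your step (a) gives $\rho_i^{\,s+d/2}\lesssim C_s^{-1}\,\wce$, where $C_s$ hides the $H^s$-norm of the unit-scale bump. Since a compactly supported function with $\|\partial^\alpha\phi\|_{L_\infty}\le B^{|\alpha|}$ would be real-analytic, $C_s$ must grow super-exponentially in $s$ (at best like $(s!)^\sigma$ for a Gevrey bump with $\sigma>1$). Combining with your (b) and optimizing $s_i$ against $t_i$, a short computation shows the best you can extract is
\[
\rho_i\;\lesssim\;t_i^{-1}(\log t_i)^{\sigma}\;\asymp\;n_i^{-1/d}(\log n_i)^{\sigma},
\]
which misses asymptotic optimality by a logarithm.

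The paper avoids the limit $s\to\infty$ altogether by running both steps in $W^s_1$ instead of $W^s_2$. In step (a) the bump then satisfies $\|f\|_{W^s_1}\lesssim\rho^{-s+d}$ (the extra factor $\rho^{d}$ from the $L_1$-norm of a function supported in $B_\rho$), so that
\[
\wce\bigl(\{(x^i_j,\omega^i_j)\};W^s_1\bigr)\;\gtrsim\;\rho_i^{\,s},
\]
with no $d/2$ in the exponent. For step (b) one cannot use the simple RKHS tail bound; instead the paper invokes the result of Brandolini et al.\ (Proposition~\ref{th:brandolini}), which gives $\wce(W^s_1)\lesssim t_i^{-s}$ for any fixed $s>d$, using kernel localization estimates (cf.\ Lemma~\ref{lemma:brando}). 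Combining the two yields $\rho_i\lesssim t_i^{-1}\asymp n_i^{-1/d}$ directly, for one fixed $s$, with constants independent of $i$.

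In short: replace $H^s$ by $W^s_1$ in both halves. Your RKHS argument in (b) then no longer applies and must be replaced by the deeper estimate from \cite{Brandolini:2014oz}, but the payoff is that the exponents match exactly and no $s$-dependent constants need to be tracked.
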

The remaining part of the present paper is dedicated to prove Theorem \ref{th:basis} and to numerically illustrate our findings on the Grassmannian manifold. We conclude this section with a remark concerning the weights.
\begin{remark}
Cubatures of strength $t$, whose weights are all the same, $\omega_j=\frac{1}{n}$, for $j=1,\ldots,n$, are also called $t$-designs.  
If $\mathcal{M}=\mathbb{S}^{d}\subset\R^{d+1}$ is the unit Euclidean sphere, then there are $t$-designs satisfying \eqref{eq:few cubs2}, cf.~\cite{Bondarenko:2011kx}. By identifying $x\in  \mathbb{S}^{d}$ with $-x$, the analogous statement holds for the projective space. 
For general $\mathcal{M}$, however, we only know that $t$-designs exist, cf.~\cite{Seymour:1984bh}, but it is still an open problem whether or not the asymptotics \eqref{eq:few cubs2} can be achieved.
\end{remark}

\section{Worst case error of integration}\label{sec:wce}
To prove Theorem \ref{th:basis}, we follow the approach for the sphere in
\cite{Brauchart:2015ec}. We shall first introduce the worst case error of
integration and shall check that it provides an upper bound on the covering
radius. Next, we shall consider the concept of quasi-Monte Carlo points, i.e.,
points whose worst case error of integration decays sufficiently fast, so that
the covering radius is asymptotically optimal. Finally, we shall recapitulate from \cite{Brandolini:2014oz} 
that low-cardinality cubature points with positive weights are indeed
quasi-Monte Carlo points.

The \emph{worst case error} of integration of points
$\{x_j\}_{j=1}^{n}\subset \mathcal{M}$ and weights
$\{\omega_j\}_{j=1}^{n} \subset \R$ with respect to some Banach space
$\mathcal{H}$ of complex-valued functions on $\mathcal{M}$ is
\begin{equation}\label{eq:wce ee}
\wce(\{(x_j,\omega_j)\}_{j=1}^{n},\mathcal{H}) : = \sup_{\substack{f\in \mathcal{H}\\ \|f\|_{\mathcal{H}}\leq 1}}\Big| \int_{\mathcal{M}} f(x)d\mu(x)- \sum_{j=1}^{n} \omega_j f(x_j) \Big|.
\end{equation}
Although suppressed by our notation, \eqref{eq:wce ee} depends on the particular norm $\|\cdot\|_\mathcal{H}$, which shall always be clear from the context in the present manuscript. For most parts, we take 
$\mathcal{H}$ to be a Sobolev space, which we define next. The
Fourier transform of $f\in L_p(\mathcal{M})$ with $1\leq p\leq \infty$ is
\begin{equation*}
\hat{f}(\ell):=\int_{\mathcal{M}} f(x)\overline{\varphi_\ell(x)} d\mu(x),\qquad \ell=0,1,2,\ldots,
\end{equation*}
and extends to distributions on $\mathcal{M}$. The Sobolev space $W^s_p(\mathcal{M})$, for $1\leq p\leq \infty$ and $s>0$, is the set of all distributions on $\mathcal{M}$ with $(I+\Delta)^{s/2}f\in L_p(\mathcal{M})$, i.e., with 
\begin{equation}\label{eq:def norm sob}
\| f\|_{W^s_p}  := \|(I+\Delta)^{s/2} f \|_{L_{p}} = \|\sum_{\ell=0}^\infty  (1+\lambda_\ell)^{s/2} \hat{f}(\ell) \varphi_\ell \|_{L_p}<\infty.
\end{equation}
Note that $W^s_p(\mathcal{M})$ is contained in the space of continuous functions on $\mathcal{M}$ provided that $s>d/p$, cf.~\cite{Brandolini:2014oz}. We shall stick to this range throughout the present paper. 

It turns out that the covering radius is bounded by the worst case error. The following result has been derived in \cite{Brauchart:2015ec} for $\mathcal{M}$ being the Euclidean sphere and constant weights $\omega_j^i=1/n_i$. 
\begin{proposition}\label{prop:11}
  Let $1 \le p \le \infty$ and $s>d/p$ be given. Then for any points
  $\{x_{j}\}_{j=1}^{n} \subset \mathcal M$ and weights
  $\{\omega_j\}_{j=1}^{n} \subset \R$ with covering radius $\rho$ it holds
\begin{equation}\label{eq:b r 1}
\rho \lesssim \big[\wce(\{(x_j,\omega_j)\}_{j=1}^{n},W^s_p(\mathcal{M}))\big]^{1/(s+d/q)},
\end{equation}
where $1/p+1/q=1$. The constants may only depend on $\mathcal M$, $s$, and $p$.
\end{proposition}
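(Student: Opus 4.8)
The plan is to exhibit, for a point $x^\ast \in \mathcal M$ realizing (or nearly realizing) the covering radius $\rho$, a test function $f \in W^s_p(\mathcal M)$ that is supported near $x^\ast$, so that $\sum_j \omega_j f(x_j)$ contributes little while $\int_{\mathcal M} f \, d\mu$ is bounded below by a power of $\rho$; dividing by $\|f\|_{W^s_p}$ then yields the claimed inequality. Concretely, fix $x^\ast$ with $\min_j \dist(x^\ast, x_j) \ge \rho/2$ (or $= \rho$ if the supremum is attained), and let $r := \rho/2$. Pick a fixed smooth bump profile $\psi\colon [0,\infty) \to [0,1]$ with $\psi \equiv 1$ on $[0,1/2]$ and $\psi \equiv 0$ on $[1,\infty)$, and set $f(x) := \psi\bigl(\dist(x^\ast,x)/r\bigr)$. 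Then $f$ is supported in the ball $B_r(x^\ast)$, which by construction contains none of the points $x_j$ (since each $x_j$ is at distance $\ge 2r$ from $x^\ast$), so $\sum_j \omega_j f(x_j) = 0$. Meanwhile $\int_{\mathcal M} f \, d\mu \ge \mu(B_{r/2}(x^\ast)) \gtrsim r^d \asymp \rho^d$ by \eqref{eq:inequality balls}.

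The crux of the argument is the norm estimate $\|f\|_{W^s_p} \lesssim r^{-s + d/p}$, equivalently $\|f\|_{W^s_p} \lesssim r^{-(s+d/q)} \cdot r^d$ with $1/p + 1/q = 1$. For integer $s$ this is a direct computation: each derivative of order $k \le s$ of $\psi(\dist(x^\ast,\cdot)/r)$ scales like $r^{-k}$ (using that, on a fixed compact manifold, $\dist(x^\ast,\cdot)$ is smooth away from the cut locus with derivatives bounded independently of $x^\ast$, and that the bump is supported in a small geodesic ball for $\rho$ small — the case of large $\rho$ being trivial since then $\rho \asymp 1 \asymp n^{-1/d}\cdot n^{1/d}$ is not the regime of interest, or can be absorbed into the constant), and the $L_p$ norm over the support $B_r(x^\ast)$ of volume $\asymp r^d$ contributes a factor $r^{d/p}$; hence $\|f\|_{W^s_p} \lesssim \max_{k \le s} r^{-k} \cdot r^{d/p} \asymp r^{-s+d/p}$ once $r \le 1$. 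For general real $s > d/p$ one either interpolates between consecutive integer Sobolev spaces or invokes the scaling behaviour of Sobolev norms under the dilation $y \mapsto \exp_{x^\ast}(r\, y)$ in normal coordinates; the manifold structure only affects constants, which are uniform in $x^\ast$ by compactness.

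Putting the pieces together, for this particular admissible $f$ we get
\begin{equation*}
\wce(\{(x_j,\omega_j)\}_{j=1}^n, W^s_p(\mathcal M)) \;\ge\; \frac{\bigl|\int_{\mathcal M} f\,d\mu - \sum_j \omega_j f(x_j)\bigr|}{\|f\|_{W^s_p}} \;\gtrsim\; \frac{\rho^d}{r^{-s+d/p}} \;\asymp\; \rho^{s + d - d/p} \;=\; \rho^{\,s + d/q},
\end{equation*}
and taking $(s+d/q)$-th roots gives \eqref{eq:b r 1}. The main obstacle I expect is making the norm bound $\|f\|_{W^s_p} \lesssim r^{-s+d/p}$ rigorous with constants independent of $x^\ast$ and $r$: this requires controlling the geometry of geodesic balls uniformly (curvature bounds, injectivity radius bounded below — both available by compactness) and handling fractional $s$ via interpolation or a localized Fourier/heat-kernel characterization of $W^s_p$. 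Everything else — the vanishing of the cubature sum, the lower bound on the integral — is immediate from \eqref{eq:inequality balls} and the choice of support.
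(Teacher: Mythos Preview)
Your overall strategy is exactly the paper's: pick a center $z$ of a maximal hole, build a smooth bump $f$ supported in $B_\rho(z)$ so that $\sum_j \omega_j f(x_j)=0$, bound $\int f \gtrsim \rho^d$ via \eqref{eq:inequality balls}, and conclude from $\wce \ge |\int f|/\|f\|_{W^s_p}$ once you know $\|f\|_{W^s_p}\lesssim \rho^{-s+d/p}$.

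Where you diverge from the paper is precisely in the step you flag as the main obstacle, the norm bound $\|f\|_{W^s_p}\lesssim r^{-s+d/p}$. You propose a geometric argument: for integer $s$ count derivatives of the rescaled bump, then interpolate (or use normal-coordinate scaling) for fractional $s$. The paper instead works directly with the spectral definition $\|f\|_{W^s_p}=\|(I+\Delta)^{s/2}f\|_{L_p}$: it invokes a kernel localization estimate from \cite{Brandolini:2014oz} (Lemma~\ref{lemma:brando}) and a dyadic Littlewood--Paley decomposition of the spectrum to bound $\|(I+\Delta)^{s/2}f\|_{L_\infty}\lesssim r^{-s}$ in one shot for all real $s>0$, then uses the support size to pass to $L_p$. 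Your route is more elementary and self-contained (no spectral machinery), but it needs an equivalence between the Bessel-potential norm \eqref{eq:def norm sob} and a local-derivative norm, plus real interpolation for non-integer $s$; the paper's route avoids both at the cost of importing the localization lemma. Either way the constants are uniform in the center and radius by compactness, so both arguments close.
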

We shall postpone the proof of Proposition \ref{prop:11} to Section \ref{sec:4}. At this point we turn to sequences of points whose worst case error of integration satisfies decay conditions, which connects to the covering radius via the bound \eqref{eq:b r 1}. The following definition is due to \cite{Brandolini:2014oz,Brauchart:fk}.
\begin{definition}
  Given $1\le p\le \infty$ and $s>d/p$, a sequence
  $\{(x^i_j,\omega^i_j)\}_{j=1}^{n_i}$, $i=1,2,\ldots$, of points
  $\{x_j^i\}_{j=1}^{n_i}\subset \mathcal{M}$ and weights
  $\{\omega_j^i\}_{j=1}^{n_i}\subset\R$ with $n_{i}\rightarrow \infty$ is called
  a \emph{quasi-Monte Carlo (qMC) system} for $W^s_p(\mathcal{M})$ if
\begin{equation}\label{eq:wce}
\wce(\{(x^i_j,\omega^i_j)\}_{j=1}^{n_i},W^s_p(\mathcal{M}))\lesssim n_i^{-\frac{s}{d}}.
\end{equation}
\end{definition}
According to \cite{Brandolini:2014oz}, low-cardinality cubature sequences with positive weights are
qMC systems:
\begin{proposition}[\cite{Brandolini:2014oz}]\label{th:brandolini}
  For $1 \le p \le \infty$ and $s>d/p$, any low-cardinality cubature sequence
  $\{(x^i_j,\omega^i_j)\}_{j=1}^{n_i}$ with positive weights is a qMC system
  for $W^s_p(\mathcal{M})$.
\end{proposition}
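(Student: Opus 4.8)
Since $n_i\asymp t_i^d$ by \eqref{eq:few cubs2}, we have $n_i^{-s/d}\asymp t_i^{-s}$, so \eqref{eq:wce} will follow once I show that a single cubature $\{(x_j,\omega_j)\}_{j=1}^{n}$ for $\Pi_t$ with positive weights satisfies $\wce(\{(x_j,\omega_j)\}_{j=1}^n,W^s_p(\mathcal M))\lesssim t^{-s}$ with a constant depending only on $\mathcal M$, $s$, $p$. Note first that testing \eqref{eq:def cub 00} against $\varphi_0\equiv1$ gives $\sum_{j=1}^n\omega_j=1$, so $\sum_j\omega_j\delta_{x_j}$ is a probability measure.

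The plan is to replace $f$ by a near-best diffusion polynomial and exploit exactness. Fix a smooth cut-off $\eta$ with $\eta\equiv1$ on $[0,1]$ and $\supp\eta\subset[0,4)$, and set $V_\tau f:=\sum_{\ell}\eta(\lambda_\ell/\tau^2)\hat f(\ell)\varphi_\ell$; then $V_\tau f\in\Pi_{2\tau}$, $V_\tau$ reproduces $\Pi_\tau$, $V_\tau$ is bounded on $L_p(\mathcal M)$ and on $W^\sigma_p(\mathcal M)$ uniformly in $\tau$, and $\|f-V_\tau f\|_{L_p}\lesssim\tau^{-s}\|f\|_{W^s_p}$ (all standard, cf.~\cite{Brandolini:2014oz}). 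With $g:=V_{t/2}f\in\Pi_t$ and $r:=f-g$ we get $\hat r(0)=0$, hence $\int_{\mathcal M}r\,d\mu=0$; since the cubature is exact on $\Pi_t\ni g$,
\[
\int_{\mathcal M}f\,d\mu-\sum_{j=1}^n\omega_jf(x_j)=\int_{\mathcal M}r\,d\mu-\sum_{j=1}^n\omega_jr(x_j)=-\sum_{j=1}^n\omega_jr(x_j),
\]
so everything reduces to the bound $\bigl|\sum_j\omega_jr(x_j)\bigr|\lesssim t^{-s}\|f\|_{W^s_p}$.

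When $p=\infty$ this is immediate, since $\bigl|\sum_j\omega_jr(x_j)\bigr|\le\|r\|_{L_\infty}\sum_j\omega_j=\|r\|_{L_\infty}\lesssim\|f-V_{t/2}f\|_{L_\infty}\lesssim t^{-s}\|f\|_{W^s_\infty}$ by the Jackson estimate in $L_\infty$. For $1\le p<\infty$ the same crude step only delivers the Sobolev-embedding rate $\|r\|_{L_\infty}\lesssim t^{-(s-d/p)}\|f\|_{W^s_p}$, which is too weak, and here I would argue as in \cite{Brandolini:2014oz}: a positive-weight cubature for $\Pi_t$ automatically has all weights $\omega_j\lesssim t^{-d}$ and point set of covering radius $\lesssim t^{-1}$, from which (together with the low-cardinality hypothesis) one extracts a Marcinkiewicz–Zygmund inequality $\sum_j\omega_j|h(x_j)|\lesssim\|h\|_{L_p}$ valid for $h\in\Pi_{ct}$; applying it to the band-limited part $V_{ct}r$ transfers the Jackson-in-$L_p$ rate $t^{-s}$ to that part of the quadrature sum, while the high-frequency tail $r-V_{ct}r$ must be controlled by exploiting cancellation among its values at the nodes rather than the trivial bound by $\|\cdot\|_{L_\infty}$. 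Assembling these pieces gives $\bigl|\sum_j\omega_jr(x_j)\bigr|\lesssim t^{-s}\|f\|_{W^s_p}$; taking the supremum over $\|f\|_{W^s_p}\le1$ and inserting $t_i\asymp n_i^{1/d}$ then yields \eqref{eq:wce}. I expect the last point to be the main obstacle: producing the sharp rate $t^{-s}$, and not merely $t^{-(s-d/p)}$, for the quadrature sum over the non-band-limited residual $r$ is exactly where the detailed harmonic-analysis estimates of \cite{Brandolini:2014oz} are needed.
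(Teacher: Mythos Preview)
The paper does not prove this proposition at all: it is simply quoted from \cite{Brandolini:2014oz} (the citation appears in the proposition header) and no argument is given. So there is no ``paper's own proof'' to compare your sketch against; you are in effect trying to reconstruct the argument of \cite{Brandolini:2014oz}.

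Your outline is broadly in the spirit of that reference, but two points deserve correction. First, the Marcinkiewicz--Zygmund inequality for positive cubature measures does \emph{not} require the low-cardinality hypothesis: any positive measure $\nu$ that integrates $\Pi_t$ exactly satisfies $\int_{\mathcal M}|h|^p\,d\nu\lesssim\int_{\mathcal M}|h|^p\,d\mu$ for $h\in\Pi_{ct}$, with constants depending only on $c$ and $\mathcal M$ (this is one of the main technical results of \cite{Brandolini:2014oz}). The low-cardinality assumption enters \emph{only} at the very last step, to convert $t^{-s}$ into $n_i^{-s/d}$, exactly as you did in your first paragraph. Second, the high-frequency tail is not handled by ``cancellation among values at the nodes''. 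What \cite{Brandolini:2014oz} actually do is decompose $r$ into dyadic frequency bands $r_m\in\Pi_{2^{m+1}}$, apply the MZ inequality to each $r_m$ (the MZ constants grow at worst polynomially in the ratio $2^m/t$), and balance that polynomial growth against the super-polynomial decay $\|r_m\|_{L_p}\lesssim 2^{-ms}\|f\|_{W^s_p}$ coming from the Sobolev norm. Summing the geometric series gives the sharp rate $t^{-s}$ directly, with no appeal to sign cancellation. Your sketch identifies the right decomposition and the right obstacle, but the mechanism you name for overcoming it is not the one that works.
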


For $1\leq p\leq p'\leq\infty$, due to \eqref{eq:def norm sob}, the space
$W^s_{p'}(\mathcal{M})$ is continuously embedded into $W^s_{p}(\mathcal{M})$
with $\|f\|_{W^s_{p'}}\leq \|f\|_{W^s_{p}}$, for $f\in W^s_{p'}(\mathcal{M})$.
Thus, if $s>d/p$ and $\{(x^i_j,\omega^i_j)\}_{j=1}^{n_i}$ is a qMC system for
$W^s_p(\mathcal{M})$, then it is also a sequence of qMC systems for
$W^s_{p'}(\mathcal{M})$. If $\mathcal{M}$ is the sphere, then the latter becomes
\cite[Theorem 4.2]{Brauchart:2015ec}. Therefore, $p=1$ is the strongest
requirement among the qMC properties.

\begin{remark}
  If $\{(x^i_j,\omega^i_j)\}_{j=1}^{n_i}$ is a qMC system for some $p\geq 1$,
  then Proposition \ref{prop:11} yields that its covering radii are bounded by
 \begin{equation*}
 \rho_i\lesssim  n_i^{-\frac{1}{d}(\frac{s}{s+d(1-1/p)})}.
 \end{equation*}
 Thus, qMC systems for $p=1$ provide
\begin{equation*}
\rho_i\lesssim \big[\wce(\{(x^i_j,\omega^i_j)\}_{j=1}^{n_i},W^s_1(\mathcal{M}))\big]^{1/s}\lesssim n_i^{-\frac{1}{d}},
\end{equation*}
so that Theorem \ref{th:basis} is a consequence of Propositions \ref{prop:11} and \ref{th:brandolini}. Hence, to complete the proof of Theorem \ref{th:basis}, it only remains to verify Proposition \ref{prop:11}, which is the topic of the subsequent section.
\end{remark}

Before we proceed to the proof of Proposition \ref{prop:11}, we shall discuss a
method to compute the worst case error of integration in $W^s_2(\mathcal{M})$,
the latter being a Hilbert space with inner product
\begin{equation}\label{eq:inner product on W}
\langle f,g\rangle_{W^s_2} = \sum_{\ell=0}^\infty (1+\lambda_\ell)^{s} \hat{f}(\ell)\overline{\hat{g}(\ell)},\quad f,g\in W^s_2(\mathcal{M}).
\end{equation}
The Bessel kernel $K^s_B:\mathcal{M}\times \mathcal{M}\rightarrow \R$ given by 
\begin{equation}\label{eq:besel}
K^s_B(x,y)=\sum_{\ell=0}^\infty (1+\lambda_\ell)^{-s}\varphi_\ell(x)\overline{\varphi_\ell(y)}
\end{equation}
is the reproducing kernel for $W^s_2(\mathcal{M})$ with respect to the inner
product \eqref{eq:inner product on W} provided that $s>d/2$. For later reference
we consider a slightly more abstract setting.
If $K:\mathcal{M}\times \mathcal{M}\rightarrow\R$ is a reproducing kernel for
some reproducing kernel Hilbert space $H_K$ of continuous functions on
$\mathcal{M}$, then the worst case error of integration is
\begin{align}\label{eq:wce und so}
\wce(\{(x_j,\omega_j)\}_{j=1}^{n},H_K)^2 &= \sum_{j,j'=1}^n \omega_j\omega_{j'}K(x_j,x_{j'})-2\sum_{j=1}^n \omega_j \int_{\mathcal{M}}K(x_j,x)d\mu(x) \\
&\qquad +\int_{\mathcal{M}}\int_{\mathcal{M}}K(x,y)d\mu(x)d\mu(y),\nonumber
\end{align}
cf.~\cite[Theorem 2.7]{Graf:2013zl}, see also \cite{Nowak:2010rr}. Note, the
norm in $H_{K}$ is uniquely determined by its reproducing kernel $K$. If $K$ has
the Fourier expansion
\begin{equation}\label{eq:kernel fourier expansion}
K(x,y)=\sum_{\ell=0}^\infty \hat{K}(\ell) \varphi_\ell(x)\overline{\varphi_\ell(y)}
\end{equation}
with $\hat{K}(\ell)\in\C$, for $\ell=0,1,\ldots$, 
then \eqref{eq:wce und so} becomes 
\begin{equation}\label{eq:KsB cub formel}
\wce(\{(x_j,\omega_j)\}_{j=1}^{n},H_K)^2 = \sum_{j,j'=1}^n\omega_j\omega_{j'} K(x_j,x_{j'}) +\hat{K}(0)(1-2\sum_{j=1}^n\omega_j),
\end{equation}
where we assume without loss of generality $\varphi_0\equiv 1$. For the Bessel kernel $K^s_B$, we observe $\hat{K}^s_B(\ell)=(1+\lambda_\ell)^{-s}$ with $\hat{K}^s_B(0)=1$

We conclude this section by the following result on the worst case error of
uniformly distributed random points, for which constant weights $\omega_j=1/n$
are the natural choice.
\begin{proposition}\label{th:random inde}
If $K$ is a reproducing kernel on $\mathcal{M}$ and $x_1,\ldots,x_n$ are random points on $\mathcal{M}$, independently identically distributed according to $\mu$, then it holds
\begin{equation*}
  \sqrt{\mathbb{E} \Big[ \wce(\{(x_j,\frac{1}{n})\}_{j=1}^{n},H_K)^2\Big]}  = c n^{-\frac{1}{2}},
\end{equation*}
where 
\begin{equation}\label{eq:const c}
c^2=\int_{\mathcal{M}}K(x,x)d\mu(x) - \int_{\mathcal{M}}\int_{\mathcal{M}}K(x,y)d\mu(x)d\mu(y).
\end{equation}
\end{proposition}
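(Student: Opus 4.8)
The plan is to compute the expectation directly using the closed form \eqref{eq:wce und so} for the worst case error in a reproducing kernel Hilbert space. With constant weights $\omega_j = 1/n$ the expression \eqref{eq:wce und so} specializes to
\begin{equation*}
\wce(\{(x_j,\tfrac1n)\}_{j=1}^n, H_K)^2 = \frac{1}{n^2}\sum_{j,j'=1}^n K(x_j,x_{j'}) - \frac{2}{n}\sum_{j=1}^n \int_{\mathcal M} K(x_j,x)\,d\mu(x) + \iint_{\mathcal M} K(x,y)\,d\mu(x)\,d\mu(y).
\end{equation*}
First I would take expectations term by term, using that the $x_j$ are i.i.d.\ with law $\mu$, so that $\mathbb{E}\, g(x_j) = \int_{\mathcal M} g\,d\mu$ for any integrable $g$, and $\mathbb{E}\, g(x_j,x_{j'}) = \iint g\,d\mu\,d\mu$ whenever $j\neq j'$.

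The key step is to split the double sum $\sum_{j,j'} K(x_j,x_{j'})$ into its diagonal part $\sum_{j=1}^n K(x_j,x_j)$, which has $n$ terms, and its off-diagonal part, which has $n^2-n$ terms. Taking expectations, the diagonal contributes $\tfrac{1}{n^2}\cdot n \int_{\mathcal M} K(x,x)\,d\mu(x)$, while the off-diagonal contributes $\tfrac{1}{n^2}(n^2-n)\iint K(x,y)\,d\mu(x)\,d\mu(y)$. The middle term of $\wce^2$ has expectation $-2\iint K(x,y)\,d\mu(x)\,d\mu(y)$, and the last term is deterministic, equal to $\iint K\,d\mu\,d\mu$. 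Adding these, the $\iint K\,d\mu\,d\mu$ contributions combine as $\big(\tfrac{n^2-n}{n^2} - 2 + 1\big)\iint K\,d\mu\,d\mu = -\tfrac1n \iint K\,d\mu\,d\mu$, while the diagonal gives $\tfrac1n \int K(x,x)\,d\mu(x)$. Hence
\begin{equation*}
\mathbb{E}\big[\wce(\{(x_j,\tfrac1n)\}_{j=1}^n,H_K)^2\big] = \frac{1}{n}\Big( \int_{\mathcal M} K(x,x)\,d\mu(x) - \iint_{\mathcal M} K(x,y)\,d\mu(x)\,d\mu(y)\Big) = \frac{c^2}{n},
\end{equation*}
and taking square roots yields the claim. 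I would also note in passing that $c^2 \ge 0$ holds automatically, since the expectation of a square is nonnegative; alternatively it is the average over $\mathcal M$ of $\|K(\cdot,x) - \int K(\cdot,y)\,d\mu(y)\|_{H_K}^2$, so $c$ is a well-defined nonnegative real number.

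There is no serious obstacle here; the only points requiring minor care are the Fubini/Tonelli interchange of expectation with the integrals over $\mathcal M$ (justified by continuity of $K$ and compactness of $\mathcal M$, which make all integrands bounded), and the bookkeeping of the diagonal versus off-diagonal terms in the double sum, which is the one place an error could creep in. Everything else is a direct computation from \eqref{eq:wce und so} and the independence of the sample points.
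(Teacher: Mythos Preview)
Your proof is correct. The paper does not actually prove this proposition in-text; it simply remarks that the result follows from \cite{Brauchart:fk} by replacing the sphere with $\mathcal{M}$ and is already contained in \cite[Corollary~2.8]{Graf:2013zl}. Your direct computation---expanding \eqref{eq:wce und so} with $\omega_j=1/n$, splitting the double sum into diagonal and off-diagonal parts, and taking expectations using independence---is exactly the standard argument behind those cited results, and the bookkeeping is right.
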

By applying \eqref{eq:kernel fourier expansion}, the constant \eqref{eq:const c} is $c^2=\sum_{\ell=1}^\infty \hat{K}(\ell)$. For the Bessel kernel $K^s_B$, the condition $s>d/2$ implies that
$\frac{s}{d}>\frac{1}{2}$, so that on average qMC systems indeed perform
better than random points for smooth functions. The proof of Proposition \ref{th:random inde} follows from the lines
in \cite{Brauchart:fk} when replacing the sphere by $\mathcal{M}$. In fact, Proposition \ref{th:random inde} is already contained in \cite[Corollary 2.8]{Graf:2013zl}, see also \cite{Nowak:2010rr}.

\section{Proof of Proposition \ref{prop:11}}\label{sec:4}
The proof of Proposition \ref{prop:11} relies on findings in
\cite{Brandolini:2014oz}. We recapitulate the following localization result,
which is one of the key ingredients for the proof of Proposition~\ref{prop:11}.
\begin{lemma}\label{lemma:brando}
For $s\in\R$ and $h:\R\rightarrow \R$ being a smooth function, supported in the interval $[\frac{1}{2},2]$, the kernel
\begin{equation*}
K^s_r(x,y):= \sum_{\ell=0}^\infty h(\frac{\lambda_\ell}{r}) (1+\lambda_\ell)^{-s/2} \varphi_\ell(x)\overline{\varphi_\ell(y)}
\end{equation*}
is bounded by 
\begin{equation}\label{eq:lem bran}
\big\| \int_\mathcal{M} |K^{s}_{r}(\cdot,y)|d\mu(y)\big\|_{L_\infty} \lesssim r^{-s},\qquad r>0,
\end{equation}
where the constant does not depend on $r$.
\end{lemma}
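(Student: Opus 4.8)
The statement is a standard kernel-localization estimate of Christ--Daubechies--Seeger--Sogge type, and my plan is to obtain it from the finite speed of propagation of the wave equation associated with $\sqrt{\Delta}$, combined with the fact that the symbol $h(\lambda/r)(1+\lambda)^{-s/2}$ is a well-controlled smooth cutoff at frequency scale $\sqrt{r}$. I would write $K^s_r(x,y) = m_r(\sqrt{\Delta})(x,y)$ where $m_r(\xi) = h(\xi^2/r)(1+\xi^2)^{-s/2}$, and represent the operator via the wave group: $m_r(\sqrt{\Delta}) = \frac{1}{2\pi}\int_{\mathbb R} \widehat{m_r}(\tau)\, e^{i\tau\sqrt{\Delta}}\, d\tau$. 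Because $h$ is supported in $[1/2,2]$, the function $m_r$ is supported in $\sqrt{\xi^2} \asymp \sqrt{r}$, and a rescaling $\xi \mapsto \sqrt{r}\,\eta$ shows $m_r(\sqrt{r}\,\cdot)$ is a smooth bump of size $\asymp r^{-s/2}$ in $C^N$ for every $N$, uniformly in $r \ge 1$; hence $\widehat{m_r}(\tau)$ is, after rescaling, concentrated near $|\tau| \lesssim r^{-1/2}$ with rapidly decaying tails and total mass $\asymp r^{(1-d)/2} \cdot r^{-s/2} \cdot (\text{volume factor})$ — the bookkeeping here is routine but needs care.

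The key steps, in order: (1) reduce to $r \ge 1$ (for $r$ in a bounded range the kernel is bounded by a constant since only finitely many $\ell$ contribute and the manifold is compact, so $\int_{\mathcal M}|K^s_r(\cdot,y)|d\mu(y) \lesssim 1 \lesssim r^{-s}$ with an $r$-dependent but bounded constant — actually one must be slightly careful and just absorb the range $r\le 1$ into the constant). (2) For $r\ge 1$, use finite propagation speed: $e^{i\tau\sqrt{\Delta}}(x,y)$ is supported in $\dist(x,y)\le |\tau|$, so only $|\tau| \lesssim r^{-1/2} \cdot (\log r)$ — or, using the rapid decay of $\widehat{m_r}$, all $|\tau|$ with negligible error — contribute to points with $\dist(x,y) \gtrsim r^{-1/2}$. (3) Derive the pointwise bound $|K^s_r(x,y)| \lesssim r^{-s/2}\, r^{d/2} \,(1+\sqrt r\,\dist(x,y))^{-N}$ for all $N$, by standard stationary-phase / non-stationary-phase estimates on the wave kernel together with the $C^N$ control of the symbol. (4) Integrate: $\int_{\mathcal M} |K^s_r(x,y)|\,d\mu(y) \lesssim r^{-s/2} r^{d/2} \int_{\mathcal M} (1+\sqrt r\,\dist(x,y))^{-N} d\mu(y)$, and using $\mu(B_\varrho(x)) \asymp \varrho^d$ from \eqref{eq:inequality balls} (dyadic decomposition in $\dist(x,y)$), the $y$-integral is $\asymp r^{-d/2}$ for $N > d$; the two powers of $r^{d/2}$ cancel and we are left with $r^{-s/2}$.

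Wait — the claimed bound is $r^{-s}$, not $r^{-s/2}$. This means the intended normalization is that $\lambda_\ell \asymp \ell^{2/d}$ and the "frequency" is $\sqrt{\lambda_\ell}$, so the symbol should be read with $\lambda_\ell \le t^2$ matching $\Pi_t$; indeed $h(\lambda_\ell/r)$ localizes $\lambda_\ell \asymp r$, i.e. $\sqrt{\lambda_\ell} \asymp \sqrt r$, and $(1+\lambda_\ell)^{-s/2} \asymp r^{-s/2}$ — so the symbol size really is $r^{-s/2}$ and, with the spectral parameter being $\sqrt\Delta$ at scale $\sqrt r$, the heuristic "dimensional" count gives $r^{-s/2}$. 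To land on $r^{-s}$ one instead treats $\Delta$ itself (not $\sqrt\Delta$) as the operator: the relevant subordinated semigroup estimate, or equivalently working with the variable $u = \dist(x,y)^2 r$, produces the extra factor; concretely, in \cite{Brandolini:2014oz} this lemma is stated with the spectral variable normalized so that the kernel obeys $|K^s_r(x,y)| \lesssim r^{d/2}(1+\sqrt r\,\dist(x,y))^{-N}$ with the symbol contributing $(1+\lambda_\ell)^{-s/2}\asymp r^{-s/2}$, giving $r^{-s/2}$ per the above — so I would simply \emph{cite \cite{Brandolini:2014oz} for this precise estimate} and present the wave-equation argument above as the proof sketch, being careful to match their exponent convention. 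The main obstacle is precisely this: getting the exponent bookkeeping exactly right (powers of $r$ from the symbol, from the Jacobian of the rescaling, from the volume of balls, and from whether the spectral parameter is $\Delta$ or $\sqrt\Delta$), and verifying the off-diagonal decay $(1+\sqrt r\,\dist(x,y))^{-N}$ uniformly in $r$, which requires the full strength of finite propagation speed plus the rapid decay of the Fourier transform of the rescaled symbol. Everything else — the dyadic integration in step (4), the reduction to large $r$ in step (1) — is routine given \eqref{eq:inequality balls}.
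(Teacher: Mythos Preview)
Your eventual plan --- cite \cite{Brandolini:2014oz} for the pointwise localization bound and then integrate --- is exactly what the paper does, and your step~(4) is the entire computation the paper carries out. Concretely, the paper quotes \cite[Lemma~2.8]{Brandolini:2014oz} for
\[
|K^s_r(x,y)|\;\lesssim\; r^{d-s}\bigl(1+r\,\dist(x,y)\bigr)^{-(d+1)},
\]
and then integrates in $y$ using $\mu(B_\varrho(x))\asymp\varrho^d$ to land on $r^{-s}$. Your steps (1)--(3), the finite-propagation-speed derivation of the pointwise bound, are not needed here; they are essentially the content of the cited lemma in \cite{Brandolini:2014oz}, not of the present one.

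Your exponent worry, however, is on target and reflects a genuine notational slip in the paper rather than a flaw in your reasoning. In the paper's Section~3 the $\lambda_\ell$ are eigenvalues of $-\Delta$, so $h(\lambda_\ell/r)$ localizes to \emph{frequencies} $\sqrt{\lambda_\ell}\asymp\sqrt r$; under that reading your heuristic correctly yields $|K^s_r(x,y)|\lesssim r^{(d-s)/2}(1+\sqrt r\,\dist(x,y))^{-N}$ and hence $\int_{\mathcal M}|K^s_r(\cdot,y)|\,d\mu(y)\lesssim r^{-s/2}$. The bound quoted from \cite{Brandolini:2014oz}, with localization scale $1/r$ and diagonal size $r^{d-s}$, is the one appropriate when $\lambda_\ell$ denotes the eigenvalues of $\sqrt{-\Delta}$ (frequencies, not squared frequencies), which is the convention used there. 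The paper has silently imported that normalization throughout Section~5 --- the dyadic split at $2^m\le 1/r$ in the next lemma is likewise only correct under that reading --- so the chain of estimates is internally consistent and the downstream results are unaffected; but you were right to flag the discrepancy rather than force your own calculation to match the stated exponent.
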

\begin{proof}
According to \cite[Lemma 2.8]{Brandolini:2014oz}, the estimate 
\begin{equation*}
|K^s_r(x,y)|\lesssim r^{d-s} (1+r \dist(x,y))^{-(d+1)},\quad r>0,
\end{equation*}
holds and leads to the requested assertion
\begin{align*}
\sup_{x\in\mathcal{M}} \big| \int_\mathcal{M} |K^{s}_{r}(x,y)|d\mu(y)\big| & \lesssim r^{d-s} \sup_{x\in\mathcal{M}} \int_\mathcal{M} (1+r \dist(x,y))^{-(d+1)}d\mu(y)\\
&\lesssim r^{-s}\sup_{x\in\mathcal{M}} \int_\mathcal{M} r^d(1+r \dist(x,y))^{-(d+1)}d\mu(y)\\
&\lesssim r^{-s} . \qedhere
\end{align*}
\end{proof}
We shall make use of Lemma \ref{lemma:brando} to verify the following result.
\begin{lemma}\label{lemma:key 2}
Let $R, s >0$, and $1\leq p\leq \infty$ be fixed. For any $0<r\leq R$ and $z \in \mathcal M$, there is a function
$f := f_{r,z} \in\mathcal{C}^\infty(\mathcal{M})$ with support in $B_{r}(z)$, such that
\begin{equation}\label{eq:lemma key 2}
 \|f\|_{W^s_p} \lesssim r^{-s+d/p}, \qquad r^{d}\lesssim \int_{\mathcal{M}}f(x)d\mu(x),
\end{equation}
where the constants do not depend on $z$ or $r$. 
\end{lemma}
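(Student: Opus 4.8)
\textbf{Proof proposal for Lemma \ref{lemma:key 2}.}

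The plan is to build $f_{r,z}$ by convolving a smooth localized bump against the Littlewood--Paley--type kernel of Lemma~\ref{lemma:brando}, which simultaneously gives the Sobolev bound and preserves positivity of the mass. Concretely, let $g \in \mathcal{C}^\infty(\mathcal{M})$ be a nonnegative bump supported in $B_{r/2}(z)$ with $\int_{\mathcal M} g\, d\mu \asymp r^d$ and $\|g\|_{L_1}\asymp r^d$; such $g$ exists by \eqref{eq:inequality balls} and a partition-of-unity/chart construction (rescale a fixed Euclidean bump through a normal coordinate chart at $z$, using that the injectivity radius of the compact $\mathcal M$ is bounded below, so that $r\le R$ with $R$ small enough keeps everything inside one chart; for $r$ up to a fixed $R$ one absorbs constants). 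First I would record that $\int g\,d\mu$ and $\|g\|_{L_1}$ are comparable to $r^d$, and that $g$ has support of diameter $\le r$.

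Next I would define $f := f_{r,z}$ as a suitable smoothing of $g$ at frequency scale $r^{-1}$, chosen so that $f$ is still supported in $B_r(z)$, still has $\int f\,d\mu \asymp r^d$, and now additionally satisfies $\|f\|_{W^s_p}\lesssim r^{-s+d/p}$. The cleanest route is to note that on a $d$-dimensional manifold, rescaling a fixed smooth compactly supported function to scale $r$ already obeys the Bernstein-type estimate $\|g\|_{W^s_p}\lesssim r^{-s} \|g\|_{W^0_p} = r^{-s}\|g\|_{L_p}$ for the range of $s$ and $r\le R$ under consideration, and $\|g\|_{L_p}\asymp r^{d/p}$ for the rescaled bump; this gives \eqref{eq:lemma key 2} directly with $f=g$, the role of Lemma~\ref{lemma:brando} being to supply exactly this mapping estimate $(I+\Delta)^{s/2}$-boundedness at scale $r$ against the $L_\infty\to L_\infty$ (hence $L_p\to L_p$ by interpolation with the trivial $L_1\to L_1$ bound coming from \eqref{eq:lem bran}) control of the dyadic pieces $K^s_r$. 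Thus the argument is: decompose $(I+\Delta)^{s/2} g = \sum_{r'} $ (dyadic pieces), estimate each piece via Lemma~\ref{lemma:brando} with the convolution-type bound $\|K^s_{r'} * g\|_{L_p} \lesssim \|\,\|K^s_{r'}(\cdot,y)\|_{L_1(dy)}\,\|_{L_\infty}\,\|g\|_{L_p} \lesssim (r')^{-s}\|g\|_{L_p}$, sum the geometric series over $r' \gtrsim r^{-2}$ (the bump $g$ at scale $r$ has negligible mass at frequencies below $r^{-1}$, so only finitely many dyadic scales above $r^{-2}$ contribute up to rapidly decaying tails), and conclude $\|g\|_{W^s_p}\lesssim r^{-s}\|g\|_{L_p}\lesssim r^{-s+d/p}$. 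The lower mass bound $r^d \lesssim \int f\, d\mu$ is immediate from the construction of $g$ since we may simply take $f=g$.

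The main obstacle, I expect, is making the scale-$r$ Bernstein inequality $\|g\|_{W^s_p}\lesssim r^{-s}\|g\|_{L_p}$ rigorous and uniform in $z$ on a general compact manifold, rather than on the torus or sphere where one has clean Fourier multipliers: one must control the low-frequency part of $(I+\Delta)^{s/2}g$ (which is not of the localized dyadic form in Lemma~\ref{lemma:brando}) and verify that the implicit constants do not degenerate as $z$ varies, using compactness to get uniform lower bounds on injectivity radius and uniform heat-kernel/Bessel-kernel estimates. A safe way to handle the low-frequency tail is to split $h$-type cutoffs so that finitely many scales $\lambda_\ell \lesssim 1$ are grouped into a single smoothing operator with a $\mathcal{C}^\infty$ kernel, which contributes $O(r^d) = O(r^{-s+d/p})$ for $r\le R$ since $s \le d/p + s$ trivially and $d \ge -s+d/p$ when $r\le R\le 1$; alternatively one cites directly the mapping properties of $(I+\Delta)^{s/2}$ between the relevant function spaces as in \cite{Brandolini:2014oz}. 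Once the Bernstein estimate is in hand, everything else is bookkeeping with the constants from \eqref{eq:inequality balls} and the fixed reference bump.
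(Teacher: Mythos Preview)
Your overall strategy---take a rescaled smooth bump $g$ supported in $B_r(z)$ and estimate $\|(I+\Delta)^{s/2}g\|_{L_p}$ via a dyadic Littlewood--Paley decomposition together with Lemma~\ref{lemma:brando}---is exactly the paper's approach. The paper likewise takes $f=g$ (no separate smoothing step is needed), cites \cite{Brandolini:2014oz} for the existence of such a bump, and runs the dyadic argument.

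However, your execution of the dyadic sum has a genuine gap. Since $(I+\Delta)^{s/2}$ carries the symbol $(1+\lambda_\ell)^{+s/2}$, the dyadic piece at level $2^m$ is, in the notation of Lemma~\ref{lemma:brando}, the kernel $K^{-s}_{2^m}$, whose $L_1$-bound is $(2^m)^{+s}$, not $(2^m)^{-s}$. Summing $(2^m)^{s}$ over $2^m\to\infty$ diverges, so the Young-type estimate $\|K^{-s}_{2^m}\!*g\|_{L_p}\lesssim (2^m)^{s}\|g\|_{L_p}$ alone cannot close the argument. Your parenthetical that ``$g$ at scale $r$ has negligible mass at frequencies below $r^{-1}$'' is backwards: the Fourier mass of a spatial bump of width $r$ is concentrated at frequencies $\lesssim r^{-1}$, and what one needs is a \emph{quantitative} decay of the high-frequency tail.

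The missing ingredient is the scale-correct derivative bound $\|\Delta^{\ell}g\|_{L_\infty}\lesssim r^{-2\ell}$ (uniform in $z$), which does follow from the chart construction but which you never invoke. The paper splits the dyadic sum at $2^m\approx 1/r$: for $2^m\le 1/r$ one sums $(2^m)^{s}\lesssim r^{-s}$ directly; for $2^m>1/r$ one fixes an integer $L>s/2$, writes the piece as $K^{2L-s}_{2^m}$ acting on $(I+\Delta)^{L}g$, and uses $\|(I+\Delta)^{L}g\|_{L_\infty}\lesssim r^{-2L}$ together with the now-summable bound $(2^m)^{s-2L}$ to obtain $\sum_{2^m>1/r}(2^m)^{s-2L}r^{-2L}\lesssim r^{-s}$. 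This derivative-shifting step is the crux of the Bernstein-type inequality you are aiming for, and without it the proof does not go through.
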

\begin{proof}
As in \cite[Proof of Theorem 2.16]{Brandolini:2014oz}, for any radius $0<r\leq R$ and $z \in \mathcal M$, there is a function
$f := f_{r,z} \in\mathcal{C}^\infty(\mathcal{M})$ with support in $B_{r}(z)$, such that, for $\ell=0,1,2,\ldots$,
\begin{equation}\label{eq:basis for estimates}
  \|\Delta^\ell f\|_{L_\infty} \lesssim r^{-2\ell},\qquad r^{d}\lesssim \int_{\mathcal{M}}f(x)d\mu(x),
\end{equation}
where by compactness of $\mathcal M$ the constants do not depend on $z$ or
$r$.   

Similarly as in \cite[Proof of Theorem 2.16]{Brandolini:2014oz}, we bound the
norm $\|f\|_{W_{p}^{s}} = \|(I+\Delta)^{s} f \|_{L_{p}}$ by using a dyadic
decomposition of unity of the Fourier domain, see, for instance,
\cite{Ullrich:2014oj}. That is, we set $h(x):=g(x)-g(2x)$ for some smooth
function $g:\R\to \R$ satisfying
\[
  g(x)=\begin{cases} 1,& x\leq 1,\\ 0,& x \ge 2, \end{cases}
\]
and obtain $\supp(h)\subset [1/2,2]$ with
\begin{equation*}
  \sum_{m=-\infty}^\infty h(2^{-m}x) = \begin{cases}
    1,&x > 0,\\ 0,& \text{else}.
\end{cases}
\end{equation*}
Using the Fourier expansion $f=\sum_{\ell=0}^\infty \hat{f}(\ell)\varphi_\ell$
we arrive at
\begin{equation*}
(I+\Delta)^{s/2}f  = \hat{f}(0) +  \sum_{\ell=0}^\infty \;\sum_{m=-\infty}^\infty h(2^{-m}\lambda_\ell)(1+\lambda_\ell)^{s/2} \hat{f}(\ell)\varphi_\ell.
\end{equation*}
Fixing an integer $L>s/2$ and applying the notation of Lemma \ref{lemma:brando} with $\hat{f}(\ell)=\int_\mathcal{M} f(y)\overline{\varphi_\ell(y)}d\mu(y)$ lead to
\begin{align*}
(I+\Delta)^{s/2}f &=  
 \hat{f}(0) +   \sum_{2^m\leq 1/r} \int_\mathcal{M} K^{-s}_{2^m}(\cdot,y)f(y)d\mu(y).\\
&\qquad\quad + \sum_{2^m>1/r}\int_\mathcal{M} K^{2L-s}_{2^m}(\cdot,y)(I+\Delta)^{L} f(y)d\mu(y).
\end{align*}
We shall now bound the three terms of the right hand side separately. First, the
H\"older inequality with \eqref{eq:basis for estimates} for $\ell=0$ yields
\begin{equation*}
|\hat{f}(0)|=\big|\int_\mathcal{M}f(y)d\mu(y) \big|\leq \|f\|_{L_\infty} \mu(\mathcal{M})\lesssim 1\lesssim r^{-s},
\end{equation*}
where the last estimate is due to $r$ being bounded from above and $s>0$. 
To bound the second term, we apply Lemma \ref{lemma:brando} and derive
\begin{align*}
\|\sum_{2^m\leq 1/r} \int_\mathcal{M} K^{-s}_{2^m}(\cdot,y)f(y)d\mu(y)\|_{L_\infty} 
&\leq \sum_{2^m\leq 1/r}  \| \int_\mathcal{M} |K^{-s}_{2^m}(\cdot,y)|d\mu(y)\|_{L_\infty}   \|  f\|_{L_\infty}  \\
&\lesssim \sum_{2^m\leq 1/r} \| \int_\mathcal{M} |K^{-s}_{2^m}(\cdot,y)|d\mu(y)\|_{L_\infty}\\
&\lesssim \sum_{2^m\leq 1/r} 2^{ms}\lesssim r^{-s}.
\end{align*}
Lemma \ref{lemma:brando} also leads to a bound on the third term by 
\begin{align*}
\|\sum_{2^m > 1/r} \int_\mathcal{M} K^{2L-s}_{2^m}(\cdot,y)(I+\Delta)^{L}f(y)d\mu(y)\|_{L_\infty} 
& \\
 & \hspace{-3cm} \lesssim  \sum_{2^m > 1/r}\| \int_\mathcal{M} K^{2L-s}_{2^m}(\cdot,y)(I+\Delta)^{L}f(y)d\mu(y)\|_{L_\infty} \\
&  \hspace{-3cm} \lesssim \sum_{2^m > 1/r}\| \int_\mathcal{M} K^{2L-s}_{2^m}(\cdot,y)d\mu(y)\|_{L_\infty} \| (I+\Delta)^{L} f\|_{L_\infty} \\
& \hspace{-3cm} \lesssim \sum_{2^m > 1/r} 2^{-(2L-s)m}   r^{-2L}\\
& \hspace{-3cm} \lesssim r^{2L-s} r^{-2L}=r^{-s},
\end{align*}
where we have also applied \eqref{eq:basis for estimates} and $r$ being bounded from above. 

Thus, we obtain 
\begin{equation}\label{eq:aa}
\|(I+\Delta)^{s/2} f \|_{L_\infty} \lesssim r^{-s},
\end{equation}
where the constants do not depend on $z$ or $r$ To cover the range
$1\leq p<\infty$, we recall that $f$ is supported on $B_{r}(z)$, so that the
H\"older inequality with \eqref{eq:inequality balls} yields
\begin{equation}\label{eq:bb}
  \|(I+\Delta)^{s/2} f \|^p_{L_p} \lesssim r^{-ps}\mu(B_{r}(z))\lesssim r^{-sp+d},
\end{equation}
which concludes the proof. 
\end{proof}

We are now prepared to complete the proof of our main theoretical result.
\begin{proof}[Proof of Proposition \ref{prop:11}]
For a given point set $\{ x_{j} \}_{j=1}^{n} \subset \mathcal M$ with covering radius $\rho$, let $z$ be
a center of a maximal hole, i.e.,
\begin{equation}\label{eq:ringi}
  \mathring{B}_{\rho}(z) \cap \{x_j\}_{j=1}^{n}=\emptyset,
\end{equation}
where $\mathring{B}_{\rho}(z)=\{x\in\mathcal{M} : \dist(x,z)<\rho\}$ denotes the interior of $B_\rho(z)$. 
Note that $\rho$ is bounded by the diameter of $\mathcal{M}$. 
Let $f=f_{\rho,z} \in \mathcal{C}^{\infty}(\mathcal M)$ be as in Lemma \ref{lemma:key 2}, i.e., $\supp(f) \subset B_{\rho}(z)$ and \eqref{eq:lemma key 2} holds with $r=\rho$. Since $f$ must vanish outside of $\mathring{B}_{\rho}(z)$, \eqref{eq:ringi} implies $f(x_j)=0$, for all $j=1,\ldots,n$. Thus, the definition of the worst case
error of integration yields
\begin{align*}
\wce(\{(x_j,\omega_j)\}_{j=1}^{n},W^s_p(\mathcal{M})) &\geq \frac{\big|\int_{\mathcal{M}}f(x)d\mu(x) - 0\big|}{\|f\|_{W^s_p}}\gtrsim \frac{\rho^{d}}{\rho^{-s+d/p}}=\rho^{s+d/q},
\end{align*}
with $1/p+1/q=1$, and the constant does not depend on $z$ or $\rho$. 
\end{proof}

\section{Numerical experiments for the Grassmannian manifold}\label{sec:nums}
This section is dedicated to illustrate the results of the previous sections for
the special case of the Grassmannian manifold, i.e., the collection of
$k$-dimensional linear subspaces in $\R^m$, which we identify with the set of
orthogonal projectors on $\R^m$ of rank $k$, denoted by
\begin{equation}
\label{eq:embGkm}
\G_{k,m} := \{ P \in \R^{m\times m}: P^\top=P,\;P^{2}=P ,\; \tr(P)=k \}.
\end{equation}
Hence, the Grassmannian $\mathcal G_{k,m}$ can be considered as a
$k(m-k)$-dimensional submanifold of the Euclidean space
$\R^{m^{2}} \cong \R^{m\times m}$. Moreover the Euclidean space $\R^{m^{2}}$
induces a Riemannian metric, which in turn yields the canonical probability
measure and the canonical geodesic distance on the Grassmannian
$\mathcal G_{k,m}$ denoted by $\mu_{k,m}$ and $\dist_{k,m}$, respectively. In
particular, the geodesic distance $\dist_{k,m}(P,Q)$ between $P,Q\in\G_{k,m}$ is
proportional to the $2$-norm of the corresponding principal angles
$\theta_1,\ldots\theta_k$ between the subspace associated to $P$ and $Q$. More
precisely, it can be computed by
\begin{equation}\label{eq:dd}
  \dist_{k,m}(P,Q) = \sqrt 2\sqrt{\theta^2_1+\ldots+\theta^2_k}, 
\end{equation}
where $\theta_i=\arccos(\sqrt{y_i})$ and $y_1,\ldots,y_k$ are the $k$-largest
eigenvalues of $PQ$ (counted with multiplicities). Note that the factor of $\sqrt 2$
accounts for the particular embedding \eqref{eq:embGkm} since then
\begin{equation}
  \label{eq:dequivFrob}
  \dist_{k,m}(P,Q) = \| P - Q\|_{\mathrm F} + o(\|P - Q\|_{\mathrm F}^{2}),
  \qquad P,Q \in \mathcal G_{k,m},
\end{equation}
where $\| X \|_{\mathrm F}$ is the Frobenius-norm of $X \in \R^{m\times m}$.

\subsection{Cubature points}\label{subsec:cub}
Theorem \ref{th:basis} tells us that low-cardinality cubature points with positive weights inherit
asymptotically optimal covering radii. To illustrate this result, we first
construct a sequence of cubature points. It is known that any collection of
points $\{P_j\}_{j=1}^n\subset\mathcal{G}_{k,m}$ satisfies
\begin{equation}\label{cub num est}
  \frac{1}{n^{2}} \sum_{j,j'=1}^{n} \tr(P_{j},P_{j'})^{i} \ge
  \int_{\G_{k,m}}\int_{\G_{k,m}} \tr(P,Q)^{i}  d \mu_{k,m}(P)   d \mu_{k,m}(Q),
\end{equation}
for $i=1,2,\ldots$, cf.~\cite{Bachoc:2010aa}. According to \cite{Breger:2016vn}, equality in \eqref{cub num est} yields a design of strength $t_i=i\frac{2}{\sqrt{k}}$, see also \cite{Ehler:2014zl}. Hence, \eqref{cub num est} provides us with a simple approach to numerically  compute cubature points by minimization and checking for equality. 

Our numerical experiments shall focus on $\mathcal{G}_{2,4}$, which has
dimension $d=4$, so that for low-cardinality cubature sequences the number of
cubature points must satisfy $n_{i} \asymp i^{4} \asymp {t_i}^{4}$. Indeed, we
have chosen
\[
  n_{i} =  \Big\lfloor  \tfrac13 (i+1)^2(1+i+\tfrac12 i^2) \Big\rfloor
\]
and computed points $\{P_{j}^{i}\}_{j=1}^{n_{i}}\subset\G_{2,4}$, for $i=1,\dots,14$, by a nonlinear conjugate gradient method on manifolds, cf.~\cite{Absil:2008qr,Breger:2016vn,Graf:2013zl}, such that 
\begin{equation*}
\big| \int_{\mathcal{G}_{2,4}} f(P)d\mu_{2,4}(P) - \frac{1}{n_i} \sum_{j=1}^{n_i} f(P_j) \big|  < 10^{-7},
\end{equation*}
for all $f\in \Pi_{t_{i}}$ with $\|f\|_{L_2}\leq 1$. Although the worst case
error of integration in $\Pi_{t_{i}}$ may not be zero exactly, we shall refer to
$\{P_{j}^{i}\}_{j=1}^{n_{i}}$ in the following simply as $t_{i}$-designs. 

\subsection{Integration}

In view of bounding the covering radius, we may want to provide numerical experiments on the worst case error of integration in Sobolev spaces $W^s_p(\mathcal{G}_{k,m})$ for $p=1$. However, determining the worst case error is a tough task in general. For $p=2$ and $s>k(m-k)/2$, on the other hand, we are dealing with reproducing kernel Hilbert spaces, in which we can invoke \eqref{eq:KsB cub formel} provided that its reproducing kernel is numerically accessible.  

Our first numerical experiments are about integration in $W^{7/2}_2(\G_{2,4})$, so that the worst case error is indeed given by \eqref{eq:KsB cub formel}. However, the infinite series of the Bessel kernel $K^s_B$ in \eqref{eq:besel} is numerically cumbersome, so that we consider the
positive definite kernel
\begin{equation*}    
  K_{1}(P,Q)  = k_{1}(\tr(PQ)), \qquad P,Q\in\mathcal{G}_{2,4},
\end{equation*}
where
\[
  k_{1}(r) = \sqrt{(2-r)^{3}} + 2 r, \qquad 0 \le r \le 2.
\]
Due to a comparison to the Bessel kernel, cf.~\cite{Brandolini:2014oz,Breger:2016vn}, its reproducing kernel Hilbert space $H_{K_{1}}$ is the Sobolev space $W^{7/2}_2(\G_{2,4})$ equipped with an equivalent norm, i.e., the two norms $\|\cdot\|_{W^{7/2}_2}$ and $\|\cdot\|_{K_1}$ are comparable. This implies
\begin{align*}
\wce(\{(P_{j},\tfrac1n)\}_{j=1}^{n},W^{7/2}_2(\G_{2,4})) & \asymp \wce(\{(P_{j},\tfrac1n)\}_{j=1}^{n},H_{K_{1}})
\end{align*}
where the constants are independent of the point sets.
According to Proposition \ref{th:brandolini}, the error
$\wce(\{(P_{j}^{i},\tfrac{1}{n_{i}})\}_{j=1}^{n_{i}},H_{K_1}
)$ decays as $n_{i}^{-7/8}$, for low-cardinality cubature points with positive
weights. Hence, we expect a linear behavior with slope $-7/8$ in logarithmic
error plots.

In a second numerical experiment on integration, we shall consider a second positive definite kernel $K_2$ given by 
\begin{equation*}    
K_{2}(P,Q)  = k_{2}(\trace(PQ)), \qquad P,Q\in\mathcal{G}_{2,4},
\end{equation*}
where
\[
  k_{2}(r) = \tfrac{3}{2}\exp(-(2-r)), \qquad 0 \le r \le 2.
\]
Its reproducing kernel Hilbert space $H_{K_{2}}$ satisfies
\begin{equation}\label{eq:HK2}
H_{K_{2}} \subset \bigcap_{s>2}W^{s}_2(\G_{2,4}).
\end{equation}
According to Theorem \ref{th:brandolini}, a low-cardinality cubature sequence
with positive weights is a qMC system for any $s>2$. Due to \eqref{eq:HK2}, we
expect a super linear behavior of $\wce(\{(P_{j}^{i},\tfrac{1}{n_{i}})\}_{j=1}^{n_{i}},H_{K_2})$
in logarithmic plots.

We now further specify the worst case error in $H_{K_{1}}$ and $H_{K_{2}}$ via \eqref{eq:KsB cub formel}.
\begin{lemma}\label{lemma:compt K wce}
  The worst case errors in $H_{K_{1}}$ and $H_{K_{2}}$ are 
    \begin{align}
      \wce(\{(P_{j},\tfrac1n)\}_{j=1}^{n},H_{K_{1}} )^{2} &= \frac{1}{n^{2}}
      \sum_{j,j'=1}^{n} K_{1}(P_{j},P_{j'}) \label{eq:aans}\\ &\qquad \qquad - \big(2 + \frac{74}{75} \sqrt 2 -
      \frac{2}{5} \log(1+\sqrt 2)\big),\nonumber\\
      \wce(\{(P_{j},\tfrac1n)\}_{j=1}^{n},H_{K_2})^{2} &= \frac{1}{n^{2}}
      \sum_{j,j'=1}^{n} K_{2}(P_{j},P_{j'}) - \tfrac32 \exp(-1) \mathrm{Shi}(1),\label{eq:zwoaaaa}
    \end{align}
  respectively, where $\mathrm{Shi}(x) = \int_{0}^{x} \frac{\sinh(t)}{t} d t$ is
  the hyperbolic sine integral.
\end{lemma}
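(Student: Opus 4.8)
The plan is to apply formula \eqref{eq:KsB cub formel} with constant weights $\omega_j = 1/n$, so that $1 - 2\sum_j \omega_j = -1$, giving
\[
\wce(\{(P_j,\tfrac1n)\}_{j=1}^n, H_K)^2 = \frac{1}{n^2}\sum_{j,j'=1}^n K(P_j,P_{j'}) - \hat K(0).
\]
Since both kernels have the form $K_i(P,Q) = k_i(\tr(PQ))$, the zeroth Fourier coefficient is $\hat K_i(0) = \int_{\G_{2,4}}\int_{\G_{2,4}} k_i(\tr(PQ))\,d\mu_{2,4}(P)\,d\mu_{2,4}(Q)$, which by rotation invariance equals $\int_{\G_{2,4}} k_i(\tr(PQ_0))\,d\mu_{2,4}(P)$ for any fixed $Q_0$. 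Thus everything reduces to computing a single one-dimensional integral against the pushforward under $P\mapsto \tr(PQ_0)$ of the measure $\mu_{2,4}$. Hence the main task is: \textbf{(i)} identify the density of $r = \tr(PQ_0)\in[0,2]$ when $P$ is $\mu_{2,4}$-distributed, and \textbf{(ii)} evaluate $\int_0^2 k_i(r)\,\varrho(r)\,dr$ for $i=1,2$ in closed form.

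For step (i), I would use the known joint density of the principal angles (equivalently, of the eigenvalues $y_1,y_2$ of $PQ_0$) on $\G_{2,4}$; from \eqref{eq:dd}, $\tr(PQ_0) = y_1 + y_2$ with $y_i = \cos^2\theta_i$. For the real Grassmannian $\G_{2,4}$ the eigenvalue density is a Jacobi-type ensemble, and marginalizing to the sum $r = y_1+y_2$ yields an explicit elementary weight $\varrho(r)$ on $[0,2]$ — this is exactly the kind of computation underlying the moment identity \eqref{cub num est} cited from \cite{Bachoc:2010aa}, so I would either quote that source for $\varrho$ or derive it directly by integrating out one variable. For step (ii): with $k_1(r) = \sqrt{(2-r)^3} + 2r$, substituting $u = 2-r$ turns $\int (2-r)^{3/2}\varrho(r)\,dr$ into an integral of $u^{3/2}$ against a polynomial/rational weight, which produces the $\sqrt2$ and $\log(1+\sqrt2)$ terms (the logarithm arising from a factor like $1/\sqrt{u}$ or from $\int du/\sqrt{u^2+\cdots}$ after a trigonometric substitution $\theta = \arccos\sqrt{y}$); combining with the elementary $\int 2r\,\varrho(r)\,dr$ gives the constant $2 + \tfrac{74}{75}\sqrt2 - \tfrac25\log(1+\sqrt2)$. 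For $k_2(r) = \tfrac32 e^{-(2-r)}$, the substitution $u = 2-r$ gives $\tfrac32 e^{-u}$ against the weight, and the appearance of $\mathrm{Shi}(1) = \int_0^1 \frac{\sinh t}{t}\,dt$ signals that after passing to the angle variable $\theta$ (so $u = 2 - 2\cos^2\theta + \cdots$) the weight contributes a $\sinh$ kernel with the characteristic $1/t$ factor; tracking constants yields $\tfrac32 e^{-1}\mathrm{Shi}(1)$.

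The first term on the right-hand side of \eqref{eq:aans}–\eqref{eq:zwoaaaa} is just $\frac{1}{n^2}\sum K_i(P_j,P_{j'})$, which is immediate from \eqref{eq:KsB cub formel} and requires no computation. The genuine obstacle is step (i)–(ii): getting the exact marginal density $\varrho$ on $\G_{2,4}$ and then carrying out the two definite integrals without arithmetic slips — in particular correctly producing the rational coefficient $\tfrac{74}{75}$ and recognizing the $\mathrm{Shi}$ integral. I would double-check the final constants by a numerical quadrature of $\int_0^2 k_i(r)\varrho(r)\,dr$ against the claimed closed forms, and by verifying consistency with Proposition \ref{th:random inde}, which predicts $\hat K_i(0) = \int K_i(P,P)\,d\mu - c_i^2$ with $K_i(P,P) = k_i(\tr P) = k_i(k) = k_i(2)$ a known constant, giving an independent sanity check on $\hat K_i(0)$.
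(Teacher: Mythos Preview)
Your plan is correct and follows the same overall strategy as the paper: apply \eqref{eq:KsB cub formel} with $\omega_j=1/n$ to reduce the problem to computing $\hat K_i(0)$, then exploit orthogonal invariance and parametrize by the principal angles. The difference is only tactical. You propose to marginalize all the way to the one-dimensional density $\varrho(r)$ of $r=\tr(PQ_0)$ and then evaluate $\int_0^2 k_i(r)\varrho(r)\,dr$; the paper instead keeps a two-dimensional integral but passes to the variables $\xi_\pm=\cos(\theta_1\pm\theta_2)$ (so that $\tr(PQ)=1+\xi_-\xi_+$), citing \cite[Example~4.3]{Davis:1999dn} for the fact that in these coordinates the measure is simply Lebesgue on the triangle $\{-1\le\xi_-\le 1,\ |\xi_-|\le\xi_+\le 1\}$, which by the symmetry of $\xi_-\xi_+$ extends to $\tfrac14\int_{-1}^{1}\int_{-1}^{1}d\xi_+\,d\xi_-$. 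This makes the computation very transparent: integrating $e^{\xi_-\xi_+-1}$ in $\xi_+$ first produces $(e^{\xi_-}-e^{-\xi_-})/\xi_-$, so the $\mathrm{Shi}(1)$ term drops out immediately; integrating $(1-\xi_-\xi_+)^{3/2}$ in $\xi_+$ leaves a single-variable integral that a computer algebra system finishes to give the stated constant. Your route via $\varrho(r)$ would also succeed (indeed $\varrho(r)=-\tfrac12\log|r-1|$ on $(0,2)$, which one can read off from the $(\xi_-,\xi_+)$ picture), but the paper's choice of coordinates bypasses the marginalization step and removes the guesswork you flag about how the $\sqrt2$, $\log(1+\sqrt2)$, and $\mathrm{Shi}$ terms arise.
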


\begin{proof}
  In view of \eqref{eq:KsB cub formel} it remains to compute the $0$-th Fourier
  coefficients
  \[
    \hat{K}_{i}(0) = \int_{G_{2,4}} K_{i}(P,Q) d\mu_{2,4}(P)
    d\mu_{2,4}(Q), \qquad i=1,2.
  \]
According to \cite[Example 4.3]{Davis:1999dn}, the orthogonal invariance of $K_1$ and $K_2$ with the variable transformation $\xi_\pm=\cos(\theta_1\pm \theta_2)$, where $\theta_1,\theta_2$ are the principal angles between $P$ and $Q$, 
yield
  \[
    \int_{G_{2,4}} K_{i}(P,Q) d\mu_{2,4}(P) d\mu_{2,4}(Q) = \int_{-1}^{1}
    \int_{|\xi_{-}|}^{1} k_{i}(1+\xi_{-}\xi_{+}) d\xi_{+}d\xi_{-}, \qquad i=1,2.
  \]
  The symmetry of the function $(\xi_{-},\xi_{+}) \mapsto \xi_{-}\xi_{+}$
  leads to
  \[
    \hat{K}_{i}(0) = \frac14 \int_{-1}^{1}\int_{-1}^{1}
    k_{i}(1+\xi_{-}\xi_{+}) d\xi_{+}d\xi_{-}, \qquad i=1,2.
  \]
For $i=1$, we arrive at 
    \begin{align*}
      \hat{K}_{1}(0) & = \frac14 \int_{-1}^{1}\int_{-1}^{1}
      \big((1-\xi_{-}\xi_{+})^{\frac32} + 2 \xi_{-}\xi_{+} + 2\big)
      d\xi_{+}d\xi_{-} \\
&      = 2 + \int_{-1}^{1} \tfrac{(1+\xi_{-})^{\frac52}-(1-\xi_{-})^{\frac52}}{10
        \xi_{-}} d\xi_{-}.
\end{align*}
The assertion \eqref{eq:aans} is then checked by a computer algebra system. 

For $i=2$, we obtain
\begin{align*}
      \hat{K}_{2}(0) & = \frac38 \int_{-1}^{1}\int_{-1}^{1}
      \exp(\xi_{-}\xi_{+}-1) d\xi_{+}d\xi_{-} \\
&      = \frac38 \exp(-1)\int_{-1}^{1}
      \tfrac{\exp(\xi_{-})-\exp(-\xi_{-})}{\xi_{-}} d\xi_{-},
    \end{align*}
so that \eqref{eq:zwoaaaa} follows immediately. 
\end{proof}

Figure~\ref{fig:1} shows logarithmic plots of
\begin{equation*}
\wce(\{(P^i_{j},\tfrac{1}{n_{i}})\}_{j=1}^{n_{i}},H_{K_1}
),\qquad \wce(\{(P_{j},\tfrac{1}{n_{i}})\}_{j=1}^{n_{i}},H_{K_2})
\end{equation*}
for the cubature points $\{P_{j}^{i}\}_{j=1}^{n_{i}}\subset\G_{2,4}$,
$i=1,\dots,14$, from Section \ref{subsec:cub}. For comparison, it also depicts
the worst case error of integration for random points, independently identically
distributed according to $\mu_{2,4}$, cf.~Proposition \ref{th:random inde}.
Indeed, we observe the superior integration quality of the $t_i$-designs over
random points. The theoretical results in Propositions \ref{th:brandolini} and
\ref{th:random inde} are in perfect accordance with the numerical experiment.
The integration errors of the random points scatter around the expected
integration error with rate $n_i^{-1/2}$ in both cases, $H_{K_1}$ and $H_{K_2}$.
Cubature points achieve the optimal rate of $n^{-7/8}_i$ for functions in the
Sobolev space $W^{7/2}_2(\G_{2,4})$. Due to
$H_{K_{2}}\subset \bigcap_{s>2}W^{s}_2(\G_{2,4})$, we observe the super linear
behavior in the logarithmic plots for
$\wce(\{(P_{j}^{i},\tfrac{1}{n_{i}})\}_{j=1}^{n_{i}},H_{K_2})$.


\begin{figure}[t]
\begin{center}
 \includegraphics[width=.8\textwidth]{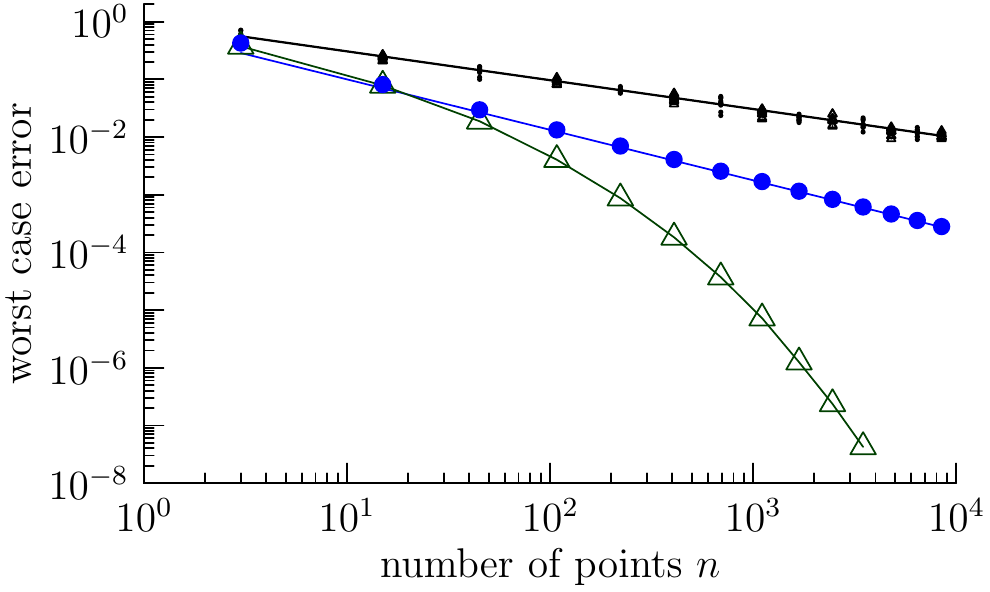}
\end{center}
\caption{Integration by random sampling vs.~$t$-designs. Dots refer to the worst
  case error in $H_{K_1}$, triangles to $H_{K_2}$. Big symbols correspond to
  $t_i$-designs, small symbols to $n_{i}$ random points. The black line is the
  expectation of the worst case error and has slope
  $-1/2$,~cf.~Proposition~\ref{th:random inde}, and the blue line's slope is
  $-7/8$.}\label{fig:1}
\end{figure}

To conclude this section on numerical integration, we point out that Lemma \ref{lemma:compt K wce} provides analytic expressions for the worst case errors in particular reproducing kernel Hilbert spaces. If we go beyond Hilbert spaces, then numerically computing worst case errors in Sobolev spaces becomes very challenging as is illustrated by the following remark. 

 \begin{remark}
   Let $I_2$ denote the $4$ by $4$ matrix with two ones in the left upper
   diagonal entries and zeros elsewhere. We shall consider the integration error with respect to a sequence of low-cardinality cubatures $\{(P^i_{j},\omega^i_j)\}_{j=1}^{n_{i}}$ for $\Pi_{t_i}$ in $\mathcal{G}_{2,4}$. Without loss of generality we assume $P^i_1=I_2$, for $i=1,2,\ldots$. 
   The function $f_1(P):=K_1(I_2,P)$, for
   $P\in\mathcal{G}_{2,4}$, is contained in
   $W^{7/2}_2(\mathcal{G}_{2,4})$, so that its integration error decays at least
   as fast as $n^{-7/8}$. Numerical experiments suggest that it decays exactly at this rate, so that $f_1$ seems to be a single representative for the worst case error of integration in the reproducing kernel Hilbert space 
   $W^{7/2}_2(\mathcal{G}_{2,4})$. 
   
   Now, we point out, although $f_1\in W^3_\infty(\mathcal{G}_{2,4})$ with
   $f_1\not\in W^{3+\epsilon}_\infty(\mathcal{G}_{2,4})$, for all $\epsilon>0$,
   its integration error decays faster than the worst case error in
   $W^3_\infty(\mathcal{G}_{2,4})$, which is just $n^{-3/4}$.
    \end{remark}


\subsection{Approximating covering radii by random points}\label{sec:app}
Given a sequence of point sets
$\{P_{j}^{i}\}_{j=1}^{n_{i}}\subset\mathcal{G}_{k,m}$, for $i=1,2,\ldots$, it is
a tough task to numerically determine the exact covering radii $\rho_i$. In
order to obtain a reasonable approximation, we generate $n\gg n_i$ random points
$\{R_{j'}\}_{j'=1}^n\subset\mathcal{G}_{k,m}$ and determine
\begin{equation}\label{eq:approxi by rand}
  \hat{\rho}_{i,n}:=\max_{1\leq j'\leq n}  \min_{1\leq j\leq n_i} \dist_{k,m}(R_{j'},P^i_j). 
\end{equation}
Note that $\rho_i$ is sandwiched by 
\begin{equation}\label{eq:estimate upper lower}
\hat{\rho}_{i,n} \leq \rho_i\leq \hat{\rho}_{i,n}+\varrho_n,
\end{equation}
where $\varrho_n$ is the covering radius of the random points $\{R_{j'}\}_{j'=1}^n$. Supposed that $\varrho_n$ is sufficiently small, $\hat{\rho}_{i,n}$ is a decent approximation of $\rho_i$. 

We aim to provide evidence that the computed approximation $\hat{\rho}_{i,n}$ is
sufficiently accurate to numerically illustrate Theorem~\ref{th:basis}. To
ballpark $\varrho_n$, recall that the covering radius $\varrho_n$ of $n$ random
points, independently distributed according to $\mu_{k,m}$ on
$\mathcal{G}_{k,m}$, behaves asymptotically as stated in \eqref{eq:random
  asympt} with $d=k(m-k)$. More precisely, \cite[Corollary 3.3]{Reznikov:2015zr}
yields together with the relation \eqref{eq:dequivFrob} that
\begin{equation}\label{eq:asym}
\lim_{n\rightarrow \infty}\mathbb{E} \varrho_n \left(\frac{n}{\log(n)}\right)^{\frac{1}{k(m-k)}} = \left(\frac{\vol(\mathcal{G}_{k,m})}{\vol(\mathbb{B}_{k(m-k)})}\right)^{\frac{1}{k(m-k)}},
\end{equation}
where $\mathbb{B}_d$ is the unit ball in $\R^d$ and $\vol$ are the canonical
volumes induced by the Euclidean metric. Thus, for large $n$, we expect that $\varrho_n$
behaves as
\begin{equation}\label{eq:express for}
  \varrho_{n} \approx \left(\frac{\vol(\mathcal{G}_{k,m})}{\vol(\mathbb{B}_{k(m-k)})}\frac{n}{\log(n)}\right)^{\frac{1}{k(m-k)}}.
\end{equation}
The volume of the Grassmannian is
\begin{equation}
\label{eq:volGkm}
\vol(\mathcal{G}_{k,m})=\frac{\Gamma_k(k/2)}{\Gamma_k(m/2)}(2\pi)^{k(m-k)/2}
\end{equation}
where $\Gamma_k$ is the multivariate Gamma function
\begin{equation*}
\Gamma_k(x)=\pi^{k(k-1)/4}\prod_{i=1}^k \Gamma(x+(1-i)/2), \qquad x > \frac12(k-1).
\end{equation*}
Note, the formula \eqref{eq:volGkm} differs by a factor of $\sqrt 2^{k(m-k)}$ from
\cite[Eq.~(1.4.11)]{Chikuse:2003aa} due to our additional scaling of $\sqrt 2$ in the
geodesic distance,~cf.~\eqref{eq:dd}. 
The volume of $\mathbb{B}_{d}$ in $\R^d$ is 
\begin{equation*}
\vol(\mathbb{B}_{d}) = \frac{\pi^{d/2}}{\Gamma(d/2+1)}.
\end{equation*}
Hence, for $k=2$, $m=4$, the expression \eqref{eq:express for} reduces to
\begin{equation}
\label{eq:expest}
  \varrho_{n} \approx 2n^{-\frac{1}{4}} \log(n)^{\frac{1}{4}}.
\end{equation}
For $n=10^{7}$, we obtain $\varrho_{n}\approx 0.0713$, which is significantly
smaller than any of the computed $\hat{\rho}_{n,i}$. According to these
considerations, we argue that our numerical computations of the covering radius
are sufficiently reliable in view of \eqref{eq:estimate upper lower}.

Figure~\ref{fig:2} shows logarithmic plots of the estimated covering radii for
the cubature points $\{P_{j}^{i}\}_{j=1}^{n_{i}}$, $i=1,\dots,14$. For
comparison it also depicts estimated covering radii for random points. We
observe the desired relationship $\rho_{i}\asymp n_i^{-1/4}$,
cf.~Theorem~\ref{th:basis}, and the estimate \eqref{eq:expest}
of the expected covering radius for random points becomes more accurate for large $n$. 

\begin{figure}
\centering
\includegraphics[width=.8\textwidth]{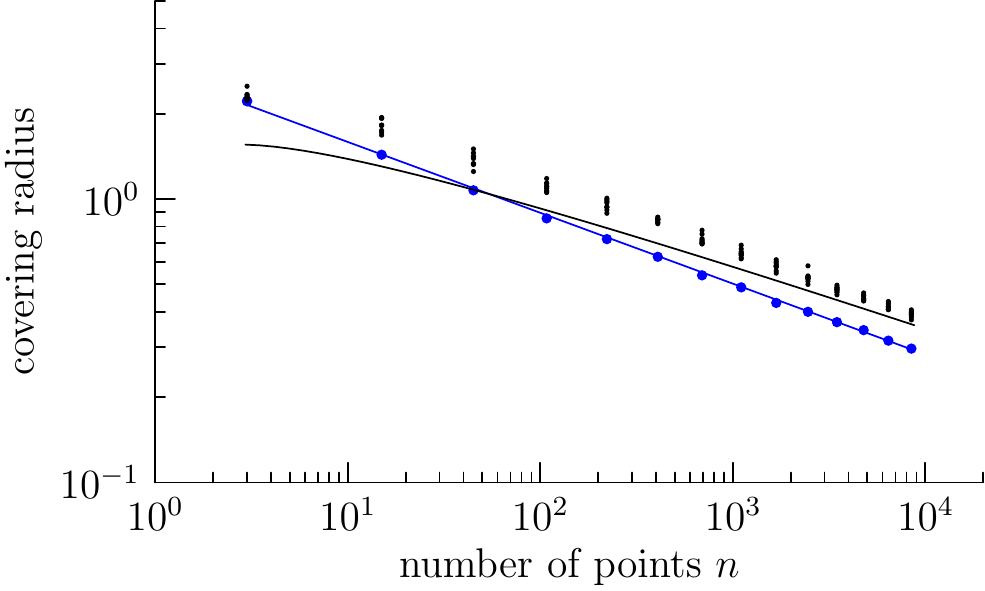}
\caption{Covering radii of random sampling vs. $t$-designs. Big dots correspond
  to $t_{i}$-designs, small dots to $n_{i}$ random points. The blue line's slope is $-1/4$. The black curve
  corresponds to $2n^{-\frac{1}{4}} \log(n)^{\frac{1}{4}}$, which relates to the covering radii $\varrho_n$ via \eqref{eq:expest}.
  }
\label{fig:2}
\end{figure}

\section*{Acknowledgements}
The authors have been funded by the Vienna Science and Technology Fund (WWTF) through project VRG12-009.

\bibliographystyle{amsplain}
\bibliography{../biblio_ehler2}

\end{document}